\newcommand{\mc}{\mathcal}
\newcommand{\mb}{\mathbb}
\pgfplotsset{my style/.append style={axis x line=middle, axis y line= middle, xlabel={$x$}, ylabel={$y$}, axis equal }}
\theoremstyle{plain}
\newtheorem{theorem}{Theorem}[section]
\newtheorem{lemma}[theorem]{Lemma}
\newtheorem{proposition}[theorem]{Proposition}
\theoremstyle{definition}
\newtheorem{remark}[theorem]{Remark}
\newtheoremstyle{named}{}{}{\itshape}{}{\bfseries}{.}{.5em}{\thmnote{#3's }#1}
\theoremstyle{named}
\begin{document}

\title{{\bf \large{TOPOLOGICAL GENERICITY OF NOWHERE DIFFERENTIABLE FUNCTIONS IN THE DISC AND POLYDISC ALGEBRAS} \vspace{4mm}}}
\author{Alexandros Eskenazis \and  Konstantinos Makridis}
\date{}
\maketitle

\begin{abstract}
\noindent In this paper we examine functions in the disc algebra $\mc{A}(D)$ and the polydisc algebra $\mc{A}(D^I)$, where $I$ is a finite or countably infinite set. We prove that, generically, for every $f \in \mc{A}(D)$ the continuous periodic functions $u=Ref|_{\mb{T}}$ and $\tilde{u} = Imf|_{\mb{T}}$ are nowhere differentiable on the unit circle $\mb{T}$. Afterwards, we generalize this result by proving that, generically, for every $f \in \mc{A}(D^I)$, where $I$ is as above, the continuous periodic functions $u=Ref|_{\mb{T}^I}$ and $\tilde{u} = Imf|_{\mb{T}^I}$ have no directional derivatives at any point of $\mb{T}^I$ and every direction $v \in \mb{R}^I$ \mbox{with $\|v\|_{\infty}=1$.}
\end{abstract}

\section{Introduction}

Strange functions have attracted the interest of mathematicians since Weierstrass, who first gave an explicit example of a function $u_0 : \mb{R} \to \mb{R}$ which is continuous, periodic but not differentiable at any real number. Since then there have been proven many results indicating the existence of functions with strange behaviour, such as non differentiability and universality in various senses.

In fact, many times these strange properties come to be generic, even if no explicit example of such a function is known. This is usually proven with some arguments involving Baire's Category Theorem in a suitable complete metric space or Fr\'echet space. For the role of Baire's Category Theorem in Analysis we refer to \cite{grosse} and \cite{kahane}. An old example of such a technique is the classical result of Banach and Mazurkiewicz (see \cite{ban}) which states that, generically, every continuous function on a compact interval $J$ of $\mb{R}$ is nowhere differentiable.

In section 2 of this paper we will prove an analogue of this theorem for functions in the disc algebra $\mc{A}(D)$, i.e. continuous functions defined on the closed unit disc $\overline{D}$ which are holomorphic on $D$. For every function $f \in \mc{A}(D)$ we can naturally construct a $2\pi-$periodic continuous function $h: \mb{R} \to \mb{C}$ defined by $h(\theta) = f(e^{i\theta})$; often, by abuse of notation, we write $f(\theta)$ instead of $h(\theta)$. Taking into consideration the above result of Banach and Mazurkiewicz it is natural to ask whether there are functions $f \in \mc{A}(D)$ such that the corresponding function $h$ is nowhere differentiable. Of course, since $h$ takes complex values, this means that for every point $\theta \in \mb{R}$, either $u=Reh$ will not be differentiable on $\theta$ or $\tilde{u} = Imh$ will not be differentiable on $\theta$. In Theorem 2.1, we will prove the stronger fact that, generically, for every function $f \in \mc{A}(D)$ both functions $u=Ref|_{\mb{T}}$ and $\tilde{u} = Imf|_{\mb{T}}$ are nowhere differentiable.

In section 3 we extend the above result in the context of several complex variables. A function $f: \overline{D}^I \to \mb{C}$ belongs to the algebra $\mc{A}(D^I)$ if it is continuous on $\overline{D}^I$, endowed with the cartesian topology, and separately holomorphic in $D^I$.  Equivalently, it is well known that $f \in \mc{A}(D^I)$ if and only if, $f$ is a uniform limit of polynomials on $\overline{D}^I$, where every polynomial is meant to depend only on finitely many variables. We consider functions $f \in \mc{A}(D^I)$, where $I$ is a finite or countably infinite set, such that neither their real part $u=Ref|_{\mb{T}^I}$, nor their imaginary part $\tilde{u}=Imf|_{\mb{T}^I}$ have directional derivatives at any point of $\mb{T}^I$, where $\mb{T}$ is the unit circle, for any direction $v \in \mb{R}^I$ with $\|v\|_{\infty}=1$. We prove that, generically, every $f \in \mc{A}(D^I)$ has the above properties. Finally, we prove that a direct generalization of this result cannot hold if $I$ is uncountable.

We mention that the previous result, valid for every $v \in \mb{R}^I$ with $\|v\|_{\infty}=1$, in the case where $I$ is a finite set implies the same result for every direction $v \in \mb{R}^I \setminus \{0\}$. Thus, in this case our result is the strongest possible. In the case where $I$ is infinite countable we have not been able to prove such a strong result. The set of directions with respect to which there is no directional derivative contains $\ell^{\infty}(I) \setminus \{0\}$ which is dense in $\mb{R}^I$, with respect to the cartesian topology.

The above results establish the topological genericity of nowhere differentiable functions in the disc and polydisc algebras. It remains an open question to determine other forms of genericity for this class of functions. For example, the dense lineability of this class, i.e. the existensc of a dense linear subspace $V$ of $\mc{A}(D)$ (or in general $\mc{A}(D^I)$), every non-zero element of which is nowhere differentiable. Also, it would be interesting to examine the spaceability of this class of functions, i.e. the existence of a closed, infinite dimensional linear subspace $W$ of $\mc{A}(D)$ (or in general $\mc{A}(D^I)$), every non-zero element of which is nowhere differentiable. Finally, since $\mc{A}(D)$ is an algebra, it would be interesting to study the algebrability of this class, i.e. to examine whether the above questions hold in the case where linear subspaces were replaced by subalgebras. These questions will hopefully be examined in future papers. For related results we refer to \cite{aron}, \cite{bayart} and \cite{gonz}.

\vspace{16mm}

\section{Nowhere differentiable functions in the disc algebra}

We start with the well known example of Weierstrass: the function $u_0 : \mb{R} \to \mb{R}$ defined by

\begin{equation}
u_0(x) = \sum_{n=0}^{\infty} a^n \cos(b^nx),
\end{equation}
for $0<a<1$ and an odd integer $b$, is a $2\pi-$periodic continuous function that is not differentiable at any real number $x$ if the condition $ab>1 + \frac{3\pi}{2}$ holds. In fact, for the differentiability of $u_0$ something even stronger holds (see \cite{titch}, pp. 351-354): for every $x \in \mb{R}$
\begin{equation}
\left| \limsup_{t \to x^+} \frac{u_0(t)-u_0(x)}{t-x} \right| = + \infty.
\end{equation}
The continuity of $u_0$ follows easily from Weierstrass' M-Test, since
$$\sum_{n=0}^{\infty} |a^n \cos(b^nx)| \leq \sum_{n=0}^{\infty} a^n < + \infty,$$
because $0<a<1$. Since $u_0$ is $2\pi-$periodic, we can see it as a function $u_0 : \mb{T} \to \mb{R}$, where $\mb{T}$ is the unit circle, and thus, it is well known that (see \cite{ahl}, pp. 168-171) there is a continuous extension of $u_0$ on the closed unit disc $\overline{D}$ which in harmonic on $D$. Of course, from the form of $u_0$ we can easily see that
\begin{equation}
u_0(z) = Re \left( \sum_{n=0}^{\infty} a^n z^{b^n} \right), \ \ \ |z| \leq 1.
\end{equation}
Hence, using once again the M-Test, the harmonic conjugate $\tilde{u}_0$ of $u_0$ can be extended continuously on $\overline{D}$ and its restriction on $\mb{T}$ is given by the formula
\begin{equation}
\tilde{u}_0(x) = \sum_{n=0}^{\infty} a^n \sin(b^nx).
\end{equation}
Using these elementary results we will provide a simple proof of the following:

\begin{theorem}
There is a $2\pi-$periodic continuous function $u: \mb{R} \to \mb{R}$ such that:
\begin{enumerate} [(i)]
\item $u$ is nowhere differentiable on $\mb{R}$.
\item It's harmonic conjugate $\tilde{u}$ extends continuously on $\overline{D}$ and
\item The $2\pi-$periodic continuous functions $\tilde{u}|_{\mb{T}}$ is nowhere differentiable on $\mb{R}$.
\end{enumerate}
In fact, the class of functions $f \in \mc{A}(D)$, such that the real part of $f$ has the above properties is residual, i.e. it contains a $G_{\delta}$ dense subset of $\mc{A}(D)$.
\end{theorem}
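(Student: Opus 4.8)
The plan is to deduce the existence statement (i)--(iii) from the stronger residuality claim, so I focus on the latter. The ambient space is the disc algebra $\mc{A}(D)$ with the supremum norm $\|f\|=\sup_{|z|\le 1}|f(z)|$, under which it is a Banach space, hence a complete metric space where Baire's theorem applies. For $f\in\mc{A}(D)$ I write $u=\mathrm{Re}\,f|_{\mb{T}}$ and $\tilde u=\mathrm{Im}\,f|_{\mb{T}}$, viewed as $2\pi$-periodic functions of $\theta$ through $\theta\mapsto f(e^{i\theta})$. For each $n\in\mb{N}$ I would introduce the set
\[
A_n=\Big\{f\in\mc{A}(D):\ \exists\,\theta_0\in\mb{R},\ |u(\theta)-u(\theta_0)|\le n(\theta-\theta_0)\ \text{ for all }\theta\in(\theta_0,\theta_0+\tfrac1n]\Big\},
\]
together with the analogous $B_n$ using $\tilde u$ in place of $u$. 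If $u$ has a finite right-hand derivative at even one point, its forward difference quotients are bounded near that point, whence $f\in A_n$ for some $n$; so the set of $f$ whose real part is somewhere differentiable lies in $\bigcup_n A_n$, and likewise the imaginary-part case lies in $\bigcup_n B_n$. It therefore suffices to prove each $A_n$ and each $B_n$ is closed and nowhere dense: then $\bigcup_n A_n\cup\bigcup_n B_n$ is meager, its complement is a dense $G_\delta$, and every $f$ in it has both $u$ and $\tilde u$ nowhere differentiable. The explicit existence statement follows at once, since a residual subset of a complete metric space is non-empty.

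For closedness I would run the standard compactness and uniform-convergence argument. If $f_k\to f$ with witnesses $\theta_0^{(k)}$, periodicity lets me take $\theta_0^{(k)}\in[0,2\pi]$ and pass to a convergent subsequence $\theta_0^{(k)}\to\theta_0$; since $\mathrm{Re}\,f_k|_{\mb{T}}\to u$ uniformly, for each fixed $\theta\in(\theta_0,\theta_0+\tfrac1n)$ the defining inequality holds for $f_k$ at $\theta_0^{(k)}$ for all large $k$ (because eventually $\theta_0^{(k)}<\theta<\theta_0^{(k)}+\tfrac1n$), and letting $k\to\infty$ yields it for $f$ at $\theta_0$, with the endpoint recovered by continuity. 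Hence $A_n$, and symmetrically $B_n$, is closed.

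Nowhere density is the heart of the matter. Given $f$ and $\varepsilon>0$, I first use density of polynomials in $\mc{A}(D)$ to choose a polynomial $p$ with $\|f-p\|<\varepsilon/2$; then $\mathrm{Re}\,p|_{\mb{T}}$ is a trigonometric polynomial, so its forward difference quotients are bounded by some $M$. Let $W(z)=\sum_n a^n z^{b^n}$ be the Weierstrass function of (3), so on $\mb{T}$ one has $W=u_0+i\tilde u_0$ with $u_0$ enjoying the strong property (2). Put $g=p+cW$ with a real $c\neq 0$ so small that $|c|\,\|W\|<\varepsilon/2$; then $\mathrm{Re}\,g|_{\mb{T}}=\mathrm{Re}\,p|_{\mb{T}}+c\,u_0$, and since $\big|\text{(quotient of }\mathrm{Re}\,g)\big|\ge |c|\,\big|\text{(quotient of }u_0)\big|-M$, property (2) forces $\limsup_{\theta\to\theta_0^{+}}\frac{|\mathrm{Re}\,g(\theta)-\mathrm{Re}\,g(\theta_0)|}{\theta-\theta_0}=+\infty$ at every $\theta_0$. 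Thus no $\theta_0$ can satisfy the condition defining $A_n$, so $g\notin A_n$ while $\|f-g\|<\varepsilon$, proving $A_n$ has empty interior. For $B_n$ I keep the same $p$ but perturb by $icW$: on $\mb{T}$ we have $\mathrm{Im}(icW)=c\,u_0$, so $\mathrm{Im}\,g|_{\mb{T}}=\mathrm{Im}\,p|_{\mb{T}}+c\,u_0$ and the identical argument applies, relying again only on (2) for $u_0$ and never on any property of $\tilde u_0$.

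The main obstacle is precisely this nowhere-density step: one must be sure that adding an arbitrarily small multiple of $W$ genuinely destroys differentiability at \emph{every} point, which works only because the bounded contribution of the smooth approximant $p$ cannot absorb the $+\infty$ blow-up of the Weierstrass difference quotients — this is exactly why the strong form (2), rather than mere non-differentiability of $u_0$, is indispensable. A lesser technical nuisance is the uniform book-keeping near the moving endpoints $\theta_0^{(k)}+\tfrac1n$ in the closedness argument, which the passage to a fixed interior $\theta$ circumvents.
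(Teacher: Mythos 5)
Your proposal is correct and follows essentially the same route as the paper: the same compactness argument for closedness of the exceptional sets (your $A_n$, $B_n$ are just the complements of the paper's $E_n$, $iE_n$), and the same density mechanism of perturbing a polynomial by the Weierstrass function, using property (2) together with the boundedness of the polynomial's difference quotients, with the multiplication-by-$i$ trick handling the imaginary part without invoking any property of $\tilde{u}_0$. The only cosmetic difference is that you add a small multiple $cW$ to a polynomial approximant of $f$, whereas the paper observes directly that the translates $f_0+p$ form a dense subset of $\mc{S}$.
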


Our proof makes use of the Weierstrass function defined above and its properties. A more technical proof without use of Weierstrass's function $u_0$ can be found in \cite{eske}. We will need some lemmas. First, for $n \in \mb{N}$, consider the sets
\begin{multline}
D_n = \Big\{ u \in \mc{C}_{\mb{R}}(\mb{T}) : \mbox{for every} \ \theta \in \mathbb{R} \ \mbox{there is a } y \in \left( \theta, \theta + \frac{1}{n} \right) \\ \mbox{such that } \ |u(y) - u(\theta)| > n | y -\theta| \Big\},
\end{multline}
where, as usual, we interpret functions defined on $\mb{T}$ as $2\pi-$periodic functions defined on $\mb{R}$. Afterwards, we will also need the sets
\begin{equation}
E_n = \{ f \in \mc{A}(D) : Ref|_{\mb{T}} \in D_n \}, \ \ \ n \in \mb{N}.
\end{equation}

\begin{lemma}
For every $n \in \mb{N}$, $E_n$ is an open set in $\mc{A}(D)$, endowed with the supremum norm.
\end{lemma}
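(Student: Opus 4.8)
The plan is to reduce the statement to the openness of the set $D_n$ inside $\mc{C}_{\mb{R}}(\mb{T})$ (with the supremum norm) and then pull it back. Consider the map $\Phi : \mc{A}(D) \to \mc{C}_{\mb{R}}(\mb{T})$ given by $\Phi(f) = Ref|_{\mb{T}}$. Since $|Re(f-g)| \le |f-g|$ pointwise and $\mb{T} \subseteq \overline{D}$, one has $\|\Phi(f) - \Phi(g)\|_{\infty} \le \|f-g\|_{\infty}$, so $\Phi$ is $1$-Lipschitz, hence continuous, and $E_n = \Phi^{-1}(D_n)$. Thus it suffices to prove that $D_n$ is open in $\mc{C}_{\mb{R}}(\mb{T})$.

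Fix $u \in D_n$. The first step is to upgrade the pointwise defining condition to a \emph{uniform} one. For $(\theta, s) \in \mb{R} \times [0, 1/n]$ set $h(\theta, s) = |u(\theta + s) - u(\theta)| - ns$ and define $P(\theta) = \max_{0 \le s \le 1/n} h(\theta, s)$. Because $h$ is continuous and $2\pi$-periodic in $\theta$, a standard argument (uniform continuity of $h$ on the compact set $\mb{T} \times [0, 1/n]$) shows that $P$ is continuous on $\mb{T}$. Moreover $P(\theta) > 0$ for every $\theta$: the membership $u \in D_n$ produces some $y \in (\theta, \theta + 1/n)$ with $h(\theta, y - \theta) > 0$, while $h(\theta, 0) = 0$, so the maximum over the closed interval is positive and is not attained at $s = 0$. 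Since $\mb{T}$ is compact, $\min_{\theta} P(\theta) =: 2\delta > 0$.

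Now the key point is that the supremum of $s \mapsto h(\theta, s)$ over the \emph{open} interval $(0, 1/n)$ equals $P(\theta)$: the closed-interval maximum is attained at some $s^{*} \neq 0$, and if $s^{*} = 1/n$ the value is recovered as a limit of interior values by continuity. Consequently, for each $\theta$ I can choose $y = y_{\theta} \in (\theta, \theta + 1/n)$ with $|u(y) - u(\theta)| - n(y - \theta) > P(\theta) - \delta \ge \delta$. If now $\|v - u\|_{\infty} < \delta/2$, then for this same $y$ we get $|v(y) - v(\theta)| \ge |u(y) - u(\theta)| - 2\|v-u\|_{\infty} > n(y-\theta) + \delta - \delta = n|y-\theta|$, so $v \in D_n$. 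Hence the open ball of radius $\delta/2$ around $u$ lies in $D_n$, proving $D_n$ — and therefore $E_n = \Phi^{-1}(D_n)$ — open. I expect the main obstacle to be precisely the passage from the ``for every $\theta$ there is a $y$'' condition to a uniform margin $\delta$ independent of $\theta$; this is where the continuity of $P$ together with the compactness of $\mb{T}$ is essential, and where one must take care that the witness $y$ can be kept strictly inside the open interval $(\theta, \theta + 1/n)$.
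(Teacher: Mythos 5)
Your proof is correct, but it takes a genuinely different route from the paper's. The paper argues on the complement: it takes a sequence $f_m \notin E_n$ converging uniformly to $f$, extracts a convergent subsequence of the witnessing points $\theta_m \in [0,2\pi]$ by Bolzano--Weierstrass, and passes to the limit in the inequality $|u_m(y)-u_m(\theta_m)| \le n|y-\theta_m|$ to conclude $f \notin E_n$, i.e.\ $\mc{A}(D)\setminus E_n$ is closed. You instead prove openness of $D_n$ directly and quantitatively: the function $P(\theta)=\max_{0\le s\le 1/n}\bigl(|u(\theta+s)-u(\theta)|-ns\bigr)$ is continuous, strictly positive, and hence bounded below by $2\delta>0$ on the compact $\mb{T}$, which converts the pointwise ``for every $\theta$ there is a $y$'' condition into a uniform margin that survives perturbations of size $\delta/2$; you then pull back along the $1$-Lipschitz map $f\mapsto Ref|_{\mb{T}}$. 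Both arguments hinge on compactness of $\mb{T}$, but yours buys more: it shows that $D_n$ itself is open in $\mc{C}_{\mb{R}}(\mb{T})$ (a reusable fact independent of the disc algebra, essentially the Banach--Mazurkiewicz setting) and produces an explicit radius, whereas the paper's soft sequential argument is shorter to write and avoids the small check that the witness $y$ can be kept in the open interval $(\theta,\theta+\tfrac1n)$ when the maximum of $h(\theta,\cdot)$ is attained at the endpoint $s=1/n$ --- a point you correctly handle by continuity. Your argument is complete as written.
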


\begin{proof}
Let $n \in \mathbb{N}$ be fixed. We will prove that $\mc{A}(D) \setminus E_n$ is closed in $\mc{A}(D)$. Let $\{f_m\}$ be a sequence in $\mc{A}(D) \setminus E_n$ and $f \in \mc{A}(D)$ such that $f_m \to f$ uniformly on $\overline{D}$. Since $f_m \notin E_n$, for each m, there is a $\theta_m \in \mb{R}$ such that 
\begin{equation}
\left| \frac{u_m(y) - u_m(\theta_m)}{y-\theta_m} \right| \leq n,
\end{equation}
for every $y \in \left( \theta_m, \theta_m + \frac{1}{n} \right)$, where $u_m=Ref_m|_{\mb{T}}$. Since each $u_m$ is $2\pi-$periodic, we can assume that $\theta_m \in [0,2\pi]$ for every $m$ and hence there is a subsequence $\{ \theta_{k_m} \}$ of $\{ \theta_m \}$ and a $\theta \in [0,2\pi]$ such that $\theta_{k_m} \to \theta$.

\smallskip

If $y \in \left( \theta, \theta + \frac{1}{n} \right)$, then for large enough $m$ it holds $y \in \left( \theta_{k_m}, \theta_{k_m} + \frac{1}{n} \right)$. Thus, applying (7) for these indices $\{k_m\}$ and then letting $m \to \infty$ we have that
\begin{equation}
\left| \frac{u(y) - u(\theta)}{y-\theta} \right| \leq n,
\end{equation}
since the convergence of $\{f_m\}$ to $f$ is uniform (again $u=Ref|_{\mb{T}}$). Hence (8) holds for every $y \in \left( \theta, \theta + \frac{1}{n} \right)$ and thus $f \notin E_n$. So, $\mc{A}(D) \setminus E_n$ is closed or equivalently $E_n$ is open.
\end{proof}

Therefore, the intersection $\bigcap_{n=1}^{\infty} E_n$ is a $G_{\delta}$ set in $\mc{A}(D)$. 

\begin{lemma}
The set
\begin{equation}
\mc{S} = \bigcap_{n=1}^{\infty} E_n
\end{equation}
is dense in $\mc{A}(D)$.
\end{lemma}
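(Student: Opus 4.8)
The plan is to combine the density of polynomials in $\mc{A}(D)$ with a suitable Weierstrass-type perturbation. Fix $f \in \mc{A}(D)$ and $\varepsilon > 0$; I want to produce $g \in \mc{S}$ with $\|f - g\|_{\infty} < \varepsilon$. Since every element of $\mc{A}(D)$ is a uniform limit of polynomials on $\overline{D}$, I would first choose a polynomial $P$ with $\|f - P\|_{\infty} < \varepsilon/2$. Writing $z = e^{i\theta}$, the boundary real part $Re\,P|_{\mb{T}}$ becomes a trigonometric polynomial in $\theta$, hence a $C^{\infty}$, in particular Lipschitz, function; let $M$ be a Lipschitz constant for it.

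Next I would perturb $P$ by the holomorphic Weierstrass function. Recall from (3) that $W(z) = \sum_{n=0}^{\infty} a^n z^{b^n}$ lies in $\mc{A}(D)$ (the M-test gives $\|W\|_{\infty} \le \sum_{n} a^n = (1-a)^{-1} < \infty$) and that $Re\,W|_{\mb{T}} = u_0$ satisfies the strong non-differentiability property (2) at every point. I would choose $\delta > 0$ so small that $\delta \|W\|_{\infty} < \varepsilon/2$ and set $g = P + \delta W \in \mc{A}(D)$; then $\|f - g\|_{\infty} < \varepsilon$.

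It remains to verify $g \in \mc{S}$, i.e. that $v := Re\,g|_{\mb{T}} = Re\,P|_{\mb{T}} + \delta u_0$ belongs to $D_n$ for every $n$. Fix $\theta \in \mb{R}$. For $t > \theta$ the triangle inequality gives
\begin{equation}
\frac{|v(t) - v(\theta)|}{t - \theta} \ge \delta \, \frac{|u_0(t) - u_0(\theta)|}{t - \theta} - \frac{|Re\,P(t) - Re\,P(\theta)|}{t - \theta} \ge \delta \, \frac{|u_0(t) - u_0(\theta)|}{t - \theta} - M.
\end{equation}
Since (2) forces the first term on the right to have $\limsup_{t \to \theta^+} = +\infty$, the same holds for the left-hand side. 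In particular, for each $n$ there exist points $t$ arbitrarily close to $\theta$ from the right with $|v(t) - v(\theta)|/(t - \theta) > n$; choosing such a $t$ inside $\left(\theta, \theta + \frac{1}{n}\right)$ shows $v \in D_n$. As this holds for every $n$, we conclude $g \in \bigcap_{n} E_n = \mc{S}$, and hence $\mc{S}$ is dense in $\mc{A}(D)$.

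The main obstacle is the last verification: one must be sure that adding the smooth term $Re\,P|_{\mb{T}}$ does not destroy the explosion of the difference quotients coming from $u_0$. This is exactly what the bound by the Lipschitz constant $M$ guarantees, and it is the reason the strong form (2) of Weierstrass' theorem — rather than mere non-differentiability — is the property one needs to propagate through the perturbation.
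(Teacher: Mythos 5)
Your proof is correct and follows essentially the same route as the paper: perturb a polynomial by the holomorphic Weierstrass function and use the strong one-sided non-differentiability (2) together with the Lipschitz bound $M$ on the smooth part to keep the difference quotients exploding. The only cosmetic difference is that you scale the Weierstrass term by a small $\delta$ to land near $f$, whereas the paper fixes $f_0$ and observes that the translate $\{f_0+p : p \ \mbox{polynomial}\}$ of the dense set of polynomials is dense; both versions of the density bookkeeping are valid.
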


\begin{proof}
Let $u_0 : \mb{T} \to \mb{R}$ be the Weierstrass function mentioned above, $\tilde{u}_0$ its harmonic conjugate and $f_0 \in \mc{A}(D)$ the function $f_0 = u_0 + i \tilde{u}_0$ defined on $\overline{D}$. From (2), we can deduce that for every $\theta \in \mb{R}$ and $n \in \mb{N}$ we can find $y_n = y_n(\theta) \in \left( \theta, \theta + \frac{1}{n} \right)$ such that
\begin{equation}
\left| \frac{u_0(y_n)-u_0(\theta)}{y_n-\theta} \right| > n,
\end{equation}
i.e. $u_0 \in D_n$ and thus $f_0 \in E_n$ for every $n \in \mb{N}$. Hence, $\mc{S} \neq \emptyset$.

\smallskip

In order to prove that $\mc{S}$ is dense in $\mc{A}(D)$ consider a polynomial $p(z)$ restricted on $\overline{D}$ and we will prove that $f_0 + p$ is also in $\mc{S}$. Let $n \in \mb{N}$ be fixed. The function $u=Rep|_{\mb{T}}$ is a $C^{\infty}$ real valued $2\pi-$periodic function defined on $\mb{R}$ and thus the same holds for its derivative $u'$. In particular, $u'$ is bounded by a constant $M>0$. Also, for every $y, \theta \in \mb{R}$ with $y \neq \theta$ there is a $\xi$ between $y$ and $\theta$ such that
\begin{equation}
\frac{u(y)-u(\theta)}{y-\theta} = u'(\xi).
\end{equation}
Hence, for every $y, \theta \in \mb{R}$, $y \neq \theta$ we have
\vspace{1mm}
\begin{equation}
\left| \frac{u(y)-u(\theta)}{y-\theta} \right| \leq M.
\end{equation}
\vspace{1mm}
Consider now a large $N>0$ such that $N>M+n$ and some $\theta \in \mb{R}$. Since $u_0 \in D_N$ there is a $y \in \left( \theta,  \theta + \frac{1}{N} \right)$ such that
\vspace{1mm}
\begin{equation}
\left| \frac{u_0(y)-u_0(\theta)}{y-\theta} \right| > N.
\end{equation}
\vspace{1mm}
Of course, it is true that $\theta < y < \theta + \frac{1}{n}$ and hence we have
\vspace{0.1mm}
\begin{equation}
\begin{split}
\left| \frac{Re(f_0+p)(y) - Re(f_0+p)(\theta)}{y-\theta} \right| & = \left| \frac{u_0(y)-u_0(\theta)}{y-\theta} + \frac{u(y)-u(\theta)}{y-\theta} \right| \\[.3cm]
& \geq \left| \frac{u_0(y)-u_0(\theta)}{y-\theta} \right| - \left| \frac{u(y)-u(\theta)}{y-\theta} \right| \\[.2cm]
& > N - M > n.
\end{split}
\end{equation}
Hence we conclude that $f_0+p \in E_n$ for the arbitrary $n$ and thus $f_0+p \in \mc{S}$. It is well known that the set of polynomials is dense in $\mc{A}(D)$; it follows that the set of translations
\begin{equation}
\{ f_0 + p : \ p \ \mbox{polynomial} \} \subseteq \mc{S}
\end{equation}
is also dense. From this we derive the density of $\mc{S}$ in $\mc{A}(D)$.
\end{proof}

\begin{lemma}
The set
\begin{equation}
\mc{T} = \mc{S} \cap i \mc{S} = \bigcap_{n=1}^{\infty} \big( E_n \cap iE_n \big)
\end{equation}
is also $G_{\delta}$ and dense in $\mc{A}(D)$.
\end{lemma}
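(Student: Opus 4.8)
The plan is to deduce this lemma almost formally from Lemmas 2.2 and 2.3 together with Baire's Category Theorem, the one new ingredient being the behaviour of $\mc{A}(D)$ under multiplication by $i$. The structural observation I would record first is that the map $\phi : \mc{A}(D) \to \mc{A}(D)$ given by $\phi(f) = if$ is well defined (since $\mc{A}(D)$ is a complex vector space), bijective with inverse $\phi^{-1}(g) = -ig$, and isometric for the supremum norm because $\|if\|_{\infty} = \|f\|_{\infty}$. In particular $\phi$ is a homeomorphism of $\mc{A}(D)$ onto itself.

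Using this, the $G_{\delta}$ assertion is immediate. Each $E_n$ is open by Lemma 2.2, so each image $iE_n = \phi(E_n)$ is open as well, being the homeomorphic image of an open set. Hence $E_n \cap iE_n$ is open for every $n$, and therefore $\mc{T} = \bigcap_{n=1}^{\infty}(E_n \cap iE_n)$ is a countable intersection of open sets, i.e. a $G_{\delta}$ set. Along the way I would also verify the displayed identity $i\mc{S} = \phi\big(\bigcap_n E_n\big) = \bigcap_n \phi(E_n) = \bigcap_n iE_n$, which holds because $\phi$ is a bijection and hence commutes with arbitrary intersections.

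For density I would argue that $i\mc{S} = \phi(\mc{S})$ is itself a dense $G_{\delta}$ set. Indeed, a homeomorphism sends $G_{\delta}$ sets to $G_{\delta}$ sets and dense sets to dense sets, and $\mc{S}$ is a dense $G_{\delta}$ set by Lemmas 2.2 and 2.3. Thus $\mc{S}$ and $i\mc{S}$ are two dense $G_{\delta}$ subsets of $\mc{A}(D)$. Since $\mc{A}(D)$ is a closed subspace of $\mc{C}(\overline{D})$ with the supremum norm it is complete, hence a Baire space, and so the intersection of the two dense $G_{\delta}$ sets $\mc{S}$ and $i\mc{S}$ is again a dense $G_{\delta}$ set. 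This is exactly the assertion that $\mc{T} = \mc{S} \cap i\mc{S}$ is $G_{\delta}$ and dense in $\mc{A}(D)$.

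Since the whole argument is soft, I do not anticipate a real obstacle; the only thing needing attention is to confirm that multiplication by $i$ really does preserve the two properties in play, so that the openness of each $iE_n$ and the density of $i\mc{S}$ are inherited from $E_n$ and $\mc{S}$ rather than being reestablished by hand. One could instead imitate the proof of Lemma 2.3 directly and show $f_0 + p \in i\mc{S}$ for every polynomial $p$; unwinding the definition, $f_0 + p \in i\mc{S}$ amounts to $Im(f_0 + p) = \tilde{u}_0 + Im\,p|_{\mb{T}} \in D_n$ for all $n$, which would force one to prove separately that the conjugate Weierstrass series $\tilde{u}_0(x) = \sum_{n=0}^{\infty} a^n \sin(b^n x)$ is nowhere differentiable. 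The homeomorphism-plus-Baire route sidesteps this extra work and is the cleaner option.
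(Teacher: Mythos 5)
Your proposal is correct and follows essentially the same route as the paper: the paper's proof of this lemma also observes that multiplication by $i$ is a homeomorphism of $\mc{A}(D)$, so that $i\mc{S}$ is a dense $G_{\delta}$ set, and then invokes Baire's Category Theorem to conclude that $\mc{T} = \mc{S} \cap i\mc{S}$ is dense and $G_{\delta}$. Your additional verifications (that $\phi(f)=if$ is an isometric bijection commuting with intersections) are just a more explicit spelling-out of the same argument.
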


\begin{proof}
Since multiplication by $i$ is a homeomorphism and $\mc{S}$ is $G_{\delta}$ and dense in $\mc{A}(D)$, it follows that $i\mc{S}$ is also $G_{\delta}$ and dense in $\mc{A}(D)$. Hence, from Baire's Category Theorem their intersection $\mc{T}$ is also $G_{\delta}$ and dense in $\mc{A}(D)$.
\end{proof}

We can now proceed to the proof of our main result:

\medskip

\noindent {\it Proof of Theorem 2.1.} Let $E$ the class of functions $f \in \mc{A}(D)$ such that both the real part $u$ and the imaginary part $\tilde{u}$ of $f$ are nowhere differentiable. We will prove that
\begin{equation}
\mc{T} = \bigcap_{n=1}^{\infty} \big( E_n \cap iE_n \big) \subseteq E
\end{equation}
and from this our result will follow. Let $f \in \mc{T}$. Since $f \in \mc{S}$ we conclude that $Ref = u \in D_n$ for every $n \in \mb{N}$. Hence, for $\theta \in \mb{T}$ and for every $n$ we can find a $y_n \in \left( \theta, \theta + \frac{1}{n} \right)$ such that
\begin{equation}
\left| \frac{u(y_n)-u(\theta)}{y_n-\theta} \right| > n.
\end{equation}
So $y_n \to \theta$ and thus, from (18), $u$ is not differentiable at $\theta$. For the imaginary part $\tilde{u}$ of $f$, since $f \in i\mc{S}$ we have that $-i f \in \mc{S}$ and thus $Re(-if) = Imf \in D_n$ for every $n \in \mb{N}$. It follows that $\tilde{u}$ is also nowhere differentiable on $\mb{T}$. So, the inclusion (17) has been proven and Theorem 2.1 now follows from Lemma 2.4.
$\hfill\Box$

\begin{remark}
Lemma 2.4 could be ommited if we had used some more difficult results. Hardy has proven in \cite{hardy} that if $0<a<1$ and $b \in \mb{N}$ such that $ab \geq 1$, then both functions
\begin{equation}
u_1(x) = \sum_{n=0}^{\infty} a^n \cos(b^nx) \ \ \ \mbox{and} \ \ \ \tilde{u}_1(x) = \sum_{n=0}^{\infty} a^n \sin(b^nx)
\end{equation}
are continuous and also belong in all the sets $D_n$ defined above with a much more technical and difficult proof from the one of Weierstrass. Hence, the function $f_1 : \overline{D} \to \mb{C}$ defined by
\begin{equation}
f_1(z) = \sum_{n=1}^{\infty} a^n z^{b^n}, \ \ \ |z| \leq 1
\end{equation}
is an element of $\mc{A}(D)$ such that $Ref_1=u_1$ has the properties listed in Theorem 2.1, i.e. $f_1 \in E$. Thus, using essentially the same density argument of Lemma 2.3 we could prove Theorem 2.1. It is of course obvious that the proof which we presented above is much more elementary.
\end{remark}

We close this section with a result concerning the size of the class of $2\pi-$periodic real valued functions $u$ that satisfy the properties listed in Theorem 2.1: we will prove that, in contrast with the result of Theorem 2.1, this subset of the space of $2\pi-$periodic functions is actually topologically small:

\begin{proposition}
Let $L$ be the subspace of $\mc{C}_{\mb{R}}(\mb{T})$ consisting of all continuous $2\pi-$periodic functions $u: \mb{R} \to \mb{R}$ which satisfy the properties (i), (ii) and (iii) of Theorem 2.1. Then $L$ is a dense set of first category in $\mc{C}_{\mb{R}}(\mb{T})$.
\end{proposition}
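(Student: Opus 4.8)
The plan is to prove the two assertions separately: that $L$ is dense in $\mc{C}_{\mb{R}}(\mb{T})$, and that $L$ is of first category. The organizing observation is that $L$ is contained in the space
$$\mc{C}_A = \{ u \in \mc{C}_{\mb{R}}(\mb{T}) : \text{the harmonic conjugate } \tilde{u} \text{ of } u \text{ extends continuously to } \overline{D}\},$$
which is exactly the set of real boundary traces of functions in $\mc{A}(D)$ via $u \mapsto u + i\tilde{u}$. Property (ii) is precisely membership in $\mc{C}_A$, so once I show that $\mc{C}_A$ is of first category, the inclusion $L \subseteq \mc{C}_A$ immediately yields that $L$ is of first category as well. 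Density, by contrast, must be established for $L$ itself, since $\mc{C}_A$ is dense but the extra conditions (i) and (iii) are genuine restrictions.

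For density I would begin from Lemma 2.4: since $\mc{T}$ is dense in $\mc{A}(D)$ it is in particular nonempty, so I fix $f_* \in \mc{T}$ and set $u_* = Re f_*|_{\mb{T}}$. Because $\mc{T} \subseteq E$ and $f_* \in \mc{A}(D)$, the function $u_*$ satisfies (i), (ii) and (iii), that is, $u_* \in L$. The key point is that the computation in the proof of Lemma 2.3 in fact shows, more generally, that adding a trigonometric polynomial (a $C^1$, hence Lipschitz, function) to any element of $\bigcap_n D_n$ produces another element of $\bigcap_n D_n$. Applying this to both $Re f_*$ and $Im f_*$ gives $f_* + q \in \mc{T}$ for every analytic polynomial $q$, and hence $u_* + Re q|_{\mb{T}} \in L$. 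As $q$ ranges over all analytic polynomials, $Re q|_{\mb{T}}$ ranges over all real trigonometric polynomials, which are dense in $\mc{C}_{\mb{R}}(\mb{T})$; since translation by $u_*$ is a homeomorphism, the set $\{ u_* + Re q|_{\mb{T}} : q \text{ an analytic polynomial}\} \subseteq L$ is dense, and therefore so is $L$.

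For the first-category assertion I would show that $\mc{C}_A$ is meager by a Banach--Steinhaus argument based on the conjugate Poisson kernel $\tilde{P}_r$, $0<r<1$. Define $T_r : \mc{C}_{\mb{R}}(\mb{T}) \to \mc{C}_{\mb{R}}(\mb{T})$ by $T_r u = \tilde{P}_r * u$, so that $T_r u$ is the restriction to the circle of radius $r$ of the harmonic conjugate of the Poisson extension of $u$; each $T_r$ is bounded with $\|T_r\| = \|\tilde{P}_r\|_{L^1}$. If $u \in \mc{C}_A$, then its conjugate is continuous on the compact set $\overline{D}$, hence bounded there, so $\sup_{0<r<1}\|T_r u\|_{\infty} < \infty$. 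Consequently $\mc{C}_A \subseteq \bigcup_{M=1}^{\infty} F_M$, where $F_M = \{u : \sup_r \|T_r u\|_{\infty} \le M\}$. Each $F_M$ is closed, being an intersection over $r$ and over the points of $\mb{T}$ of the closed sets $\{u : |(T_r u)(\theta)| \le M\}$ cut out by continuous linear functionals, and a standard translation argument shows $F_M$ has empty interior, since a ball inside $F_M$ would force $\sup_r \|T_r\| < \infty$.

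The crux, and the step I expect to be the main obstacle, is the classical estimate $\|\tilde{P}_r\|_{L^1} \to \infty$ as $r \to 1^-$ (the $L^1$ norms grow like $\log\frac{1}{1-r}$), equivalently the unboundedness of the conjugation operator on $\mc{C}(\mb{T})$. This is exactly what makes each $F_M$ nowhere dense and hence renders $\mc{C}_A = \bigcup_M F_M$ of first category; the inclusion $L \subseteq \mc{C}_A$ then completes the proof. I would cite this kernel estimate rather than reprove it. As an alternative to the Banach--Steinhaus route, one may note that $u \mapsto \tilde{u}$ is a closed, densely defined, unbounded operator on $\mc{C}_{\mb{R}}(\mb{T})$ with domain $\mc{C}_A$: were $\mc{C}_A$ of second category, the closed graph theorem would force conjugation to be everywhere defined and bounded, again contradicting its unboundedness.
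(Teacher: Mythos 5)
Your proof is correct, but both halves run along routes that differ from the paper's. For density, the paper does not translate a fixed element of $L$: it takes an arbitrary $u\in\mc{C}_{\mb{R}}(\mb{T})$, replaces it by the dilate $u(rz)$ (whose conjugate automatically extends continuously, so that $u(r\cdot)=Ref$ for some $f\in\mc{A}(D)$), and then invokes the density of $\mc{T}$ in $\mc{A}(D)$ together with $\|Re(f-g)\|_\infty\le\|f-g\|_\infty$. Your variant --- fix $f_*\in\mc{T}$, observe that the Lipschitz-perturbation computation of Lemma 2.3 gives $f_*+q\in\mc{T}$ for every analytic polynomial $q$, and let $Re\,q|_{\mb{T}}$ sweep out the (dense) real trigonometric polynomials --- is equally valid and arguably more economical, since it avoids the dilation step. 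For the first-category half the divergence is more substantial. The paper exhibits an explicit $G_\delta$ dense subset of $L^c$, namely $\mc{Z}=\{u:\sup_{|z|<1}|\tilde u(z)|=\infty\}$, proving density by constructing one concrete member of $\mc{Z}$ via a Riemann map of $D$ onto the region between the real axis and the graph of $1/x^2$ (bounded real part, unbounded conjugate) and then translating by trigonometric polynomials. You instead write the same exceptional set $\{u:\tilde u\ \mbox{bounded}\}=\bigcup_M F_M$ as a countable union of closed sets and derive nowhere-density from the classical growth $\|\tilde P_r\|_{L^1}\asymp\log\frac{1}{1-r}$ of the conjugate Poisson kernel (equivalently, the unboundedness of conjugation on $\mc{C}(\mb{T})$), which you cite rather than prove. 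By Banach--Steinhaus these are two faces of the same fact: the paper's explicit function with unbounded conjugate is precisely what witnesses $\sup_r\|T_r\|=\infty$, and conversely. What your approach buys is brevity and a soft functional-analytic framing (including the clean closed-graph reformulation); what the paper's buys is self-containedness --- it needs no kernel estimate from the literature, only the Riemann mapping theorem and Carath\'eodory's theorem, which it already cites. Both arguments are sound.
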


\begin{proof}
For the density of $L$, consider a function $u \in \mc{C}_{\mb{R}}(\mb{T})$ and an $\varepsilon >0$. We also denote by $u$ its harmonic extension on $D$ which is continuous on $\overline{D}$; due to the uniform continuity of $u$ on the compact $\overline{D}$, there is an $r \in (0,1)$ such that
\begin{equation}
|u(z) - u(rz)| < \frac{\varepsilon}{2},
\end{equation}
for every $z \in \overline{D}$. If $\tilde{u}$ is a harmonic conjugate of $u$, then the function $\varphi : \overline{D} \to \mb{R}$ defined by $\varphi(z) = u(rz)$ has a harmonic conjugate defined by the formula
\begin{equation}
\tilde{\varphi}(z) = \tilde{u}(rz),
\end{equation}
which extends continuously on $\overline{D}$ and thus there is a function $f \in \mc{A}(D)$ such that $\varphi = Ref$. Using the density of $\mc{T}$ proven above, we conclude that there is a function $g \in \mc{T}$ such that $\|f-g\|_{\infty} < \varepsilon/2$. If $\psi = Reg$, it holds that $\psi \in L$ and
\begin{equation}
\|u - \psi\|_{\infty} \leq \|u-\varphi\|_{\infty} + \|\varphi - \psi\|_{\infty} < \frac{\varepsilon}{2} + \|f-g\|_{\infty} < \varepsilon.
\end{equation}
Hence, $L$ is indeed dense in $\mc{C}_{\mb{R}}(\mb{T})$.

\smallskip

To prove that $L$ is of first category it suffices to prove that $L^c \equiv \mc{C}_{\mb{R}}(\mb{T}) \setminus L$ contains a $G_{\delta}$ dense subset. Of course, for every function $u \in L$, the harmonic conjugate $\tilde{u}$ is bounded on $D$. Hence, for the set
\begin{equation}
\begin{split}
\mc{Z} & = \{ u \in \mc{C}_{\mb{R}}(\mb{T}) : \ \sup_{|z| < 1} | \tilde{u}(z) | = \infty \} \\
& = \bigcap_{n=1}^{\infty} \{ u \in \mc{C}_{\mb{R}}(\mb{T}) : \ \sup_{|z| < 1} | \tilde{u}(z) | > n \}
\end{split}
\end{equation}
we conclude that $\mc{Z} \subseteq L^c$. We will prove that $\mc{Z}$ is $G_{\delta}$ and dense in $\mc{C}_{\mb{R}}(\mb{T})$.

\smallskip

In order to prove that $\mc{Z}$ is $G_{\delta}$ we will prove that for every $n$, the set
$\{ u \in \mc{C}_{\mb{R}}(\mb{T}) : \ \sup_{|z| \leq 1} | \tilde{u}(z) | > n \}$ is open in $\mc{C}_{\mb{R}}(\mb{T})$. This follows easily from the fact that for every $z \in D$, the linear operator
\begin{equation}
\mc{C}_{\mb{R}}(\mb{T}) \ni u \ \mapsto \ \tilde{u}(z) \in \mb{R}
\end{equation}
is continuous. Indeed, it is well known (see \cite{ahl} pp. 169) that
\begin{equation}
| \tilde{u}(z) | \leq \frac{1+|z|}{1-|z|} \|u\|_{\infty},
\end{equation}
from which the result follows.

\smallskip

For the density of $\mc{Z}$, we will proceed in a similar way as in the proof of Theorem 2.1. Using essentially the same argument, we can see that if $\mc{Z} \neq \emptyset$ then $\mc{Z}$ is dense in $\mc{C}_{\mb{R}}(\mb{T})$. Indeed, if $u \in \mc{Z}$, then for every real trigonometric polynomial $p: \mb{T} \to \mb{R}$ it will also be true that
\begin{equation}
\sup_{|z| < 1} |\tilde{u}(z) + \tilde{p}(z)| = \infty
\end{equation}
because $\tilde{p}$ is bounded on $\overline{D}$. So the problem is to find an element $u \in \mc{Z}$. Consider the domain $\Omega$ in $\mb{C}$ bounded by the line segments $x=-1$, $x=1$ the $x-$axis and the graph of the function $\varphi(x) = \frac{1}{x^2}$, i.e.
\begin{equation}
\Omega = \left\{ (x,y) \in \mb{R}^2 : 0<|x|<1 \ \mbox{and} \ 0 < y < \frac{1}{x^2} \right\} \cup \{ (0,y) : y>0 \}.
\end{equation}
Of course $\Omega$ is simply connected and not equal to the whole complex plane and thus, from the Riemann Mapping Theorem, there exists a conformal mapping $f: D \to \Omega$. Let $u=Ref$ and $\tilde{u} = Imf$. We will prove that $u \in \mc{Z}$.

\smallskip

Consider the map $\psi : \overline{\Omega} \cup \{ \infty \} \to \mb{C}$ defined by $\psi(z) = \frac{1}{z+i}$, which is a conformal map from $\Omega$ to $\psi(\Omega)$. Then, the image of $\partial \Omega \cup \{ \infty \}$ under $\psi$ is a Jordan curve in $\mb{C}$. Thus, the function $\psi \circ f : D \to \psi ( \Omega )$ is also a coformal map from the unit disc onto a Jordan domain. Hence, from Caratheodory's Theorem (see \cite{koos} pp. 36-40), $\psi \circ f$ has a continuous extension from $\overline{D}$ to $\overline{\psi(\Omega)}$ which is a homeomorphism and maps $\mb{T}$ on $\partial \psi(\Omega) = \psi \left( \partial \Omega \cup \{ \infty \} \right)$. Thus, $f$ maps continuously the unit circle $\mb{T}$ except one point to $\partial \Omega$ and the limit of $u$ at this point is $0$.

\smallskip

Hence, $u$ can be continuously extended on $\overline{D}$ and thus $u|_{\mb{T}}$ will be continuous and take values inside the interval $[-1,1]$. In contrast, the harmonic conjugate $\tilde{u}$ of $u$, restricted on the open disc $D$, will take every value of the interval $(0,+\infty)$ and thus it is unbounded. In other words $u \in \mc{Z}$ and the proof is complete.
\end{proof}

\section{The case of several variables}

We will now extend the results of the preceding section in the context of finitely many as well as infinite countably many complex variables. Let $I$ be a finite or countably infinite set. We will consider functions defined on the closed polydisc
\begin{equation}
\overline{D}^I = \{ (z_i)_{i \in I} : \ |z_i| \leq 1, \ \mbox{for every } i \in I \}
\end{equation}
which belong in the algebra of this polydisc $\mc{A}(D^I)$, i.e. they are holomorphic in $D^I$ and continuous on $\overline{D}^I$, where $\overline{D}^I$ is endowed with the cartesian topology. The distinguished boundary of this polydisc is the set $\mb{T}^I$ and a function $F : \mb{T}^I \to \mb{C}$ can equivalently be considered as a function $F: \mb{R}^I \to \mb{C}$ which is $2\pi-$periodic in every principal direction. We are interested in functions $f \in \mc{A}(D^I)$ such that their restriction $f|_{\mb{T}^I}$ has no directional derivatives in any point of $\mb{T}^I$ for a large set of directions, the largest we can prove. The main theorem of the preceding section takes the following form:

\begin{theorem}
Let $I$ be a finite or countably infinite set. There is function $f \in \mc{A}(D^I)$ such that for every point $\theta \in \mb{T}^I$ and every direction $v \in \mb{R}^I$ with $\|v\|_{\infty}=1$ both the functions $u=Ref|_{\mb{T}^I}$ and $\tilde{u} = Imf|_{\mb{T}^I}$ are not differentiable at $\theta$ in the direction $v$. In fact, the class of such functions $f$ is residual in $\mc{A}(D^I)$.
\end{theorem}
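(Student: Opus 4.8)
The plan is to mirror the structure of Section~2, replacing the single ``difference quotient exceeds $n$'' condition with a uniform condition over all admissible directions, and then to reduce the multivariable (and infinite-dimensional) situation to the one-dimensional Weierstrass construction. First I would fix, for each $n \in \mb{N}$, a set of functions $D_n^I \subseteq \mc{C}_{\mb{R}}(\mb{T}^I)$ capturing the obstruction to having \emph{any} directional derivative: roughly, $u \in D_n^I$ if for every $\theta \in \mb{T}^I$ and every $v \in \mb{R}^I$ with $\|v\|_\infty = 1$ there is some small $t \in (0,\tfrac1n)$ with $|u(\theta + tv) - u(\theta)| > n\,|t|$. Setting $E_n^I = \{ f \in \mc{A}(D^I) : Ref|_{\mb{T}^I} \in D_n^I\}$, the target residual set will again be $\mc{T}^I = \bigcap_n (E_n^I \cap iE_n^I)$, and the theorem reduces to showing each $E_n^I$ is open and $\bigcap_n E_n^I$ is dense, exactly as in Lemmas~2.2--2.4.

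The openness of $E_n^I$ should go through essentially verbatim as in Lemma~2.2: given $f_m \to f$ with $f_m \notin E_n^I$, one extracts for each $m$ a witnessing pair $(\theta_m, v_m)$ violating the difference-quotient bound on the whole interval $(0,\tfrac1n)$, uses compactness of $\mb{T}^I$ and of the sphere $\{\|v\|_\infty = 1\}$ to pass to a convergent subsequence $(\theta_{k_m}, v_{k_m}) \to (\theta, v)$, and concludes by uniform convergence that $(\theta,v)$ witnesses $f \notin E_n^I$. Here the first subtlety appears: when $I$ is countably infinite, $\mb{T}^I$ is compact in the cartesian (product) topology, but the unit sphere $\{v \in \mb{R}^I : \|v\|_\infty = 1\}$ is \emph{not} compact, so the direct diagonal extraction must be handled with care. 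I expect this to be the genuine technical obstacle, and I anticipate the paper restricts the decisive directions to those $v$ with finite support (or $v \in \ell^\infty(I)$, which the introduction flags as the achievable set), reducing each instance to finitely many active coordinates where compactness is available.

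For density, the key idea is to push the one-variable Weierstrass datum into the polydisc. I would take a single-variable $f_0 \in \mc{A}(D)$ whose real part $u_0$ lies in every $D_n$ (from Lemma~2.3), and consider its ``diagonal'' lift, e.g. a function of the sum-type variable so that along any direction $v$ with $\|v\|_\infty = 1$ the restriction $t \mapsto u_0$ inherits the bad difference quotients from the one-dimensional estimate~(2). The density of polynomials in $\mc{A}(D^I)$ (quoted in the introduction) then lets me repeat the translation argument of Lemma~2.3 almost word for word: any polynomial $p$ depends on finitely many variables, its real part restricted to $\mb{T}^I$ is $C^\infty$ with a globally bounded gradient, so all its directional difference quotients are bounded by some $M$; choosing $N > M + n$ and the Weierstrass witness gives $f_0 + p \in E_n^I$, whence $\{f_0 + p\}$ is dense in $\bigcap_n E_n^I$. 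Finally, the imaginary part is handled by the same ``$-if$'' trick via $i\mc{S}$, and Baire's theorem applied to the $G_\delta$ dense sets $E_n^I$ and $iE_n^I$ yields that $\mc{T}^I$ is residual and contained in the target class $E$, completing the proof.
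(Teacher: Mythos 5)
Your overall architecture --- open sets $E_n$, density by translating a Weierstrass-type witness by polynomials, the $-if$ trick for the imaginary part, and Baire --- is exactly the paper's, but there is a genuine gap at the point you flag and then leave unresolved. When $I$ is countably infinite the sphere $\{v\in\mb{R}^I:\|v\|_\infty=1\}$ is indeed not compact in the product topology, and neither of your proposed remedies works: restricting to finitely supported directions would prove a strictly weaker theorem, and restricting to $v\in\ell^\infty(I)$ restricts nothing, since $\|v\|_\infty=1$ already forces $v\in\ell^\infty(I)$. The paper's key device, which your proposal is missing, is the decomposition $D_n=\bigcap_{k\in I}D_n^{(k)}$, where $D_n^{(k)}$ imposes the difference-quotient condition only for directions with $|v_k|\ge\frac12$. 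Since every $v$ with $\|v\|_\infty=1$ has some coordinate of modulus at least $\frac12$, nothing is lost; for fixed $k$ the witnessing pairs $(\theta^{(m)},v^{(m)})$ live in a metrizable subset of the compact space $\mb{T}^I\times[-1,1]^I$, so a convergent subsequence exists and the limit direction still satisfies $|v_k|\ge\frac12$; and the resulting intersection $\bigcap_{n,k}\big(E_n^{(k)}\cap iE_n^{(k)}\big)$ is still countable, so Baire applies.

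The same decomposition is what makes the density step work, and here your proposal has a second, independent gap. No single ``diagonal'' or ``sum-type'' lift of $f_0$ can serve as the dense witness for all directions at once: already for $I=\{1,2\}$ the function $(\theta_1,\theta_2)\mapsto u_0(\theta_1+\theta_2)$ is literally constant along the direction $v=(1,-1)$, which has $\|v\|_\infty=1$, so its directional derivative there exists and equals $0$ everywhere. The paper instead takes, for each $k\in I$, the coordinate lift $f_k\big((z_i)_{i\in I}\big)=f_0(z_k)$, which produces unbounded difference quotients precisely for the directions with $|v_k|\ge\frac12$ (since then $v_k\neq0$), proves $f_k+p\in\mc{S}^{(k)}=\bigcap_n E_n^{(k)}$ for every polynomial $p$ by your mean-value-theorem estimate, and only at the very end intersects the countably many $G_\delta$ dense sets $\mc{S}^{(k)}$ and $i\mc{S}^{(k)}$. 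Without the per-coordinate splitting, neither your openness argument nor your density argument closes.
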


\noindent Following the steps of Section 2, for $n \in \mb{N}$ we consider the sets
\begin{equation}
\begin{split}
 D_n  = & \Big\{ u \in \mc{C}_{\mb{R}}(\mb{T}^I)  : \  \mbox{for every} \  \theta \in \mathbb{R}^I  \ \mbox{and every direction } v \in \mb{R}^I \ \mbox{with } \\ & \|v\|_{\infty}=1 \ \mbox{there is a }  y \in \left( \theta - \frac{1}{n} \cdot v, \theta + \frac{1}{n} \cdot v \right)  \mbox{such that } \\ & |u(y) - u(\theta)| > n \| y -\theta \|_{\infty} \Big\},
\end{split}
\end{equation}
and
\begin{equation}
E_n = \{ f \in \mc{A}(D^I) : Ref|_{\mb{T}^I} \in D_n \}.
\end{equation}
Since in the definition of $D_n$ we are interested in directions $v \in \mb{R}^I$ with $\|v\|_{\infty} = 1$ one can easily see that
\begin{equation}
D_n = \bigcap_{k \in I} D_n^{(k)},
\end{equation}
where each $D_n^{(k)}$ is defined as follows:
\begin{equation}
\begin{split}
D_n^{(k)}  = &  \Big\{ u \in \mc{C}_{\mb{R}}(\mb{T}^I)  :  \mbox{for every} \ \theta \in \mathbb{R}^I  \ \mbox{and every direction } v=(v_i)_{i \in I} \in \mb{R}^I \ \\ & \mbox{with } \|v\|_{\infty}=1 \  \mbox{and } |v_k| \geq \frac{1}{2} \ \mbox{there is a } y \in \left( \theta - \frac{1}{n} \cdot v, \theta + \frac{1}{n} \cdot v \right) \\ & \mbox{such that } |u(y) - u(\theta)| > n \| y -\theta \|_{\infty} \Big\}.
\end{split}
\end{equation}
Afterwards, we define naturally
\begin{equation}
E_n^{(k)} = \{ f \in \mc{A}(D^I) : Ref \in D_n^{(k)} \}.
\end{equation}

Proceeding as in the proof of Theorem 2.1 our objective is to show that the set
\begin{equation}
\bigcap_{n=1}^{\infty} \big( E_n \cap iE_n\big) = \bigcap_{n=1}^{\infty}\bigcap_{k \in I} \big( E_n^{(k)} \cap i E_n^{(k)} \big)
\end{equation}
is $G_{\delta}$ dense and its elements satisfy the desired properties of Theorem 3.1. We first prove that:

\begin{lemma}
For every $n \in \mb{N}$ and $k \in I$, $E_n^{(k)}$ is an open set in $\mc{A}(D^I)$, endowed with the supremum norm.
\end{lemma}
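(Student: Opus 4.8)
The plan is to mimic the proof of Lemma 2.2 and show that the complement $\mc{A}(D^I) \setminus E_n^{(k)}$ is sequentially closed; since $\mc{A}(D^I)$ with the supremum norm is a metric space, this gives that $E_n^{(k)}$ is open. So I would fix $n$ and $k$, take a sequence $\{f_m\} \subseteq \mc{A}(D^I) \setminus E_n^{(k)}$ with $f_m \to f$ uniformly on $\overline{D}^I$, and aim to show $f \notin E_n^{(k)}$. Writing $u_m = Re f_m|_{\mb{T}^I}$ and $u = Re f|_{\mb{T}^I}$, the hypothesis $f_m \notin E_n^{(k)}$ means $u_m \notin D_n^{(k)}$, so for each $m$ there are a base point $\theta_m \in \mb{R}^I$ and an admissible direction $v_m$ (that is, $\|v_m\|_{\infty}=1$ and $|(v_m)_k| \ge \tfrac12$) with
\[
|u_m(\theta_m + t v_m) - u_m(\theta_m)| \le n\|t v_m\|_{\infty} = n|t|, \qquad t \in \left(-\tfrac1n, \tfrac1n\right).
\]
By the $2\pi$-periodicity of $u_m$ in each principal direction I may assume $\theta_m \in [0,2\pi]^I$.

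Next I would extract a limiting base point and a limiting direction and pass to the limit in the displayed inequality, exactly as in Lemma 2.2. When $I$ is finite this is immediate: the set $\{\theta : \theta \in [0,2\pi]^I\} \times \{v : \|v\|_{\infty}=1,\ |v_k| \ge \tfrac12\}$ is compact in $\mb{R}^I \times \mb{R}^I$, so along a subsequence $\theta_m \to \theta$ and $v_m \to v$ with $\|v\|_{\infty}=1$ and $|v_k| \ge \tfrac12$ preserved. Fixing $t$, the points $\theta_m + t v_m$ and $\theta_m$ converge to $\theta + tv$ and $\theta$, and since $u_m \to u$ uniformly while $u$ is continuous on $\mb{T}^I$ (with its cartesian topology), I may let $m \to \infty$ to get $|u(\theta + tv) - u(\theta)| \le n|t|$ for all $t \in (-\tfrac1n, \tfrac1n)$. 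As $\|v\|_{\infty}=1$, this says precisely that $(\theta, v)$ witnesses $u \notin D_n^{(k)}$, i.e. $f \notin E_n^{(k)}$, which finishes the finite case.

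For countably infinite $I$ the compactness argument must be run in the cartesian topology. The product $[0,2\pi]^I \times [-1,1]^I$ is compact by Tychonoff's theorem, so I can still extract a coordinatewise convergent subsequence $\theta_m \to \theta$, $v_m \to v$, and the same limiting argument --- crucially using that $u$ is continuous for the cartesian topology on the compact $\mb{T}^I$ --- yields $|u(\theta + tv) - u(\theta)| \le n|t|$ for $|t| < \tfrac1n$. Here the constraint $|(v_m)_k| \ge \tfrac12$ passes to the limit and forces $|v_k| \ge \tfrac12$, so in particular $\|v\|_{\infty} \ge \tfrac12 > 0$ and the limiting direction does not degenerate; this is exactly the reason for introducing the sets $D_n^{(k)}$ and the threshold $\tfrac12$ in place of working with $D_n$ directly.

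The main obstacle lies precisely at this last point. Under coordinatewise convergence the value $\|v_m\|_{\infty}=1$ need \emph{not} be preserved, because the coordinate carrying the supremum may escape to infinity and leave $\|v\|_{\infty}$ strictly below $1$; the bound $|v_k| \ge \tfrac12$ only guarantees $\|v\|_{\infty} \in [\tfrac12, 1]$. Thus the limiting direction is admissible only after normalizing to $v/\|v\|_{\infty}$, and the delicate step is to check that this normalized direction is still admissible (its $k$-th coordinate still has modulus at least $\tfrac12$) and that the inequality obtained in the limit still certifies $f \notin E_n^{(k)}$ along the full segment associated with the normalized direction. Controlling this normalization constant, which is kept away from $0$ exactly by the hypothesis $|v_k| \ge \tfrac12$, is where the substance of the infinite-dimensional case resides, and I would expect the careful handling of this renormalization to be the technical heart of the proof.
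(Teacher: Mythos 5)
Your finite-$I$ argument is complete and is essentially the paper's proof. The real issue is the countably infinite case, and it sits exactly where you locate it: the limiting direction. You are right that coordinatewise convergence does not preserve $\|v^{(m)}\|_{\infty}=1$, so the limit $v$ is only guaranteed to satisfy $\tfrac12\le\|v\|_{\infty}\le 1$. (For what it is worth, the paper's own proof simply asserts that $\{v\in\mb{R}^I:\|v\|_{\infty}=1,\ |v_k|\ge\tfrac12\}$ is compact in the product topology ``from Tychonoff's Theorem''; for infinite $I$ this set is not even closed in $[-1,1]^I$ --- take $v^{(m)}$ with $k$-th coordinate $\tfrac12$, one other coordinate $i_m$ equal to $1$ with the $i_m$ pairwise distinct, and all remaining coordinates $0$: the coordinatewise limit has sup norm $\tfrac12$. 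So you have put your finger on a point that the paper's proof glosses over, and which only disappears when $I$ is finite.)

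However, your proposal does not close this gap: it only announces that the renormalization $w=v/c$ with $c=\|v\|_{\infty}$ is ``the technical heart,'' and as described that renormalization does not work. Passing to the limit in the hypothesis gives $|u(\theta+sv)-u(\theta)|\le n|s|$ for $|s|<\tfrac1n$. Writing $y=\theta+tw$ with $t=cs$, this becomes $|u(y)-u(\theta)|\le n|t|/c=n\|y-\theta\|_{\infty}/c$, and only for $|t|<c/n$. To certify $f\notin E_n^{(k)}$ via the admissible pair $(\theta,w)$ you need $|u(y)-u(\theta)|\le n\|y-\theta\|_{\infty}$ for \emph{all} $|t|<\tfrac1n$; when $c<1$ both the constant (you get $n/c>n$) and the range of $t$ (you get $c/n<1/n$) come out wrong. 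So the step you defer is not a routine verification --- it is precisely the assertion that requires proof, and it would more naturally be handled by changing the definition of $D_n^{(k)}$ so that the admissible directions form a genuinely compact set in the product topology (e.g.\ quantifying over all $v$ with $\|v\|_{\infty}\le 1$ and $|v_k|\ge\tfrac12$). As it stands, your proposal establishes the lemma only for finite $I$.
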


\begin{proof}
As in section 2, in order to prove that $\mc{A}(D^I) \setminus E_n^{(k)}$ is closed we consider a sequence $\{f_m\}$ such that $f_m \notin E_n^{(k)}$ for every $m$ and a function $f \in \mc{A}(D^I)$ such that $f_m \to f$ uniformly on $\overline{D}^I$. Since $f_m \notin E_n^{(k)}$, for each $m$, there is a $\theta^{(m)} \in \mb{T}^I$ and a direction $v^{(m)} = \big( v^{(m)}_i \big)_{i \in I} \in \mb{R}^I$ with $\|v^{(m)}\|_{\infty} = 1$ and $|v^{(m)}_k | \geq \frac{1}{2}$ such that
\begin{equation}
|u_m(y) - u_m(\theta^{(m)})| \leq n \|y-\theta^{(m)}\|_{\infty},
\end{equation}
 for every $y \in \left( \theta^{(m)} - \frac{1}{n} \cdot v^{(m)}, \theta^{(m)} + \frac{1}{n} \cdot v^{(m)} \right)$, where $u_m = Ref_m|_{\mb{T}^I}$. But both $\mb{T}^I$ and $\{ v=(v_i)_{i \in I} \in \mb{R}^I : \|v\|_{\infty} = 1 \ \mbox{and} \ |v_k| \geq \frac{1}{2} \}$ (equipped with the product topologies) are metrizable (since $I$ is at most countable) and compact  from Tychonoff's Theorem. Hence there is a strictly increasing sequence of positive integers $\{k_n\}$, a point $\theta \in \mb{T}^I$ and a direction $v=(v_i)_{i \in I} \in \mb{R}^I$ with $\|v\|_{\infty}=1$ and $|v_k| \geq \frac{1}{2}$ such that $\theta^{(k_m)} \to \theta$ and $v^{(k_m)} \to v$.

\smallskip

If $y \in \left( \theta - \frac{1}{n} \cdot v, \theta + \frac{1}{n} \cdot v \right)$ there is some $-\frac{1}{n} < s < \frac{1}{n}$ such that
\begin{equation} 
y = \theta + s \cdot v = \lim_{m \to \infty} \big( \theta^{(k_m)} + s \cdot v^{(k_m)} \big).
\end{equation}
Thus, applying (36) for these indices $\{k_m\}$, $y^{(k_m)} = \theta^{(k_m)} + s v^{(k_m)}$ and letting $m \to \infty$ we conclude that
\begin{equation}
|u(y)-u(\theta)| \leq n \|y-\theta \|_{\infty},
\end{equation}
since the convergence of $\{f_m\}$ to $f$ is uniform (again $u=Ref|_{\mb{T}^I}$). Hence (38) holds for every $y \in \left( \theta - \frac{1}{n} \cdot v, \theta + \frac{1}{n} \cdot v \right)$ and thus $f \notin E_n^{(k)}$. So $\mc{A}(D^I) \setminus E_n^{(k)}$ is closed or equivalently $E_n^{(k)}$ is open.
\end{proof}

Hence, the intersection $\bigcap_{n,k} E_n^{(k)}$ is a $G_{\delta}$ set in $\mc{A}(D^I)$.

\begin{lemma}
For every $k \in I$, the set
\begin{equation}
\mc{S}^{(k)} = \bigcap_{n=1}^{\infty} E_n^{(k)}
\end{equation}
is dense in $\mc{A}(D^I)$.
\end{lemma}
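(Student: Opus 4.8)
\medskip

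\noindent\emph{Proof proposal.} The plan is to follow the density argument of Lemma 2.3 verbatim, with the one–variable Weierstrass function replaced by the function of $\mc{A}(D^I)$ that depends only on the single coordinate $z_k$. Concretely, I would fix $0<a<1$ and an odd integer $b$ as in $(1)$ and set
\begin{equation*}
f_0^{(k)}(z) = \sum_{m=0}^{\infty} a^m z_k^{b^m}, \qquad z=(z_i)_{i\in I}\in\overline{D}^I .
\end{equation*}
Since $\sum_m a^m<\infty$ and $|z_k|\le 1$, the M–test shows that this series converges uniformly on $\overline{D}^I$ to a uniform limit of polynomials, each depending only on $z_k$; hence $f_0^{(k)}\in\mc{A}(D^I)$, and its real part on $\mb{T}^I$ is exactly $u_0(\theta_k)$, where $u_0$ is the Weierstrass function of Section 2 and $\theta=(\theta_i)_{i\in I}$. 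The point is that $Re\,f_0^{(k)}$ oscillates wildly precisely in the $k$–th coordinate, which is the only coordinate that matters for membership in $E_n^{(k)}$.

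\smallskip

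First I would check that $f_0^{(k)}\in E_n^{(k)}$ for every $n$, so that $f_0^{(k)}\in\mc{S}^{(k)}$. Fix $n$, a point $\theta\in\mb{R}^I$ and a direction $v$ with $\|v\|_{\infty}=1$ and $|v_k|\ge\frac12$. Recall from the proof of Lemma 2.3 that the Weierstrass function lies in every one–variable set $D_N$ of Section 2, so applying this at $\theta_k$ with $N>2n$ produces $\delta\in(0,1/N)$ with $|u_0(\theta_k+\delta)-u_0(\theta_k)|>N\delta$. Taking $s$ of the same sign as $v_k$ with $|s|=\delta/|v_k|$, the point $y=\theta+s\cdot v$ satisfies $y_k=\theta_k+\delta$ and $\|y-\theta\|_{\infty}=|s|$. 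Because $\frac12\le|v_k|\le1$ we have $\delta\le|s|\le 2\delta<2/N<1/n$, so $y$ lies in the required segment and
\begin{equation*}
|u(y)-u(\theta)| = |u_0(\theta_k+\delta)-u_0(\theta_k)| > N\delta \ge \tfrac{N}{2}|s| > n\|y-\theta\|_{\infty},
\end{equation*}
where $u=Re\,f_0^{(k)}|_{\mb{T}^I}$. This is the membership $f_0^{(k)}\in E_n^{(k)}$.

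\smallskip

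Next I would show, exactly as in $(15)$, that $f_0^{(k)}+p\in\mc{S}^{(k)}$ for every polynomial $p$ depending on finitely many variables. The function $Re\,p|_{\mb{T}^I}$ is smooth and depends on finitely many angular variables, so each of its partial derivatives is bounded on the compact torus; consequently there is a constant $M$ with $|Re\,p(y)-Re\,p(\theta)|\le M\|y-\theta\|_{\infty}$ for all $\theta,y$ and all directions with $\|v\|_{\infty}=1$, obtained by integrating the directional derivative $\sum_i v_i\,\partial_i Re\,p$ along the segment and using $|v_i|\le1$. Then, repeating the previous step with $N>2(M+n)$ and subtracting the bounded increment of $Re\,p$ via the reverse triangle inequality, I obtain a $y$ in the segment with $|Re(f_0^{(k)}+p)(y)-Re(f_0^{(k)}+p)(\theta)|>n\|y-\theta\|_{\infty}$, i.e. $f_0^{(k)}+p\in E_n^{(k)}$ for every $n$. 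Since the polynomials depending on finitely many variables are dense in $\mc{A}(D^I)$, the translates $\{f_0^{(k)}+p\}$ form a dense subset of $\mc{A}(D^I)$ contained in $\mc{S}^{(k)}$, which yields the density of $\mc{S}^{(k)}$.

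\smallskip

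I expect the main obstacle to be the transfer of the one–variable Weierstrass non–differentiability (which lives in the $k$–th coordinate) to \emph{arbitrary} directions $v$ with $|v_k|\ge\frac12$: moving a distance $|s|$ along $v$ changes $\theta_k$ only by $|sv_k|$, so one must control the loss incurred by measuring the increment against $\|y-\theta\|_{\infty}=|s|$ rather than against $|y_k-\theta_k|$. This is precisely where the restriction $|v_k|\ge\frac12$ enters, keeping the two quantities comparable up to the factor $2$ and forcing the choice $N>2n$ (respectively $N>2(M+n)$); it is also the reason the decomposition $D_n=\bigcap_k D_n^{(k)}$ was introduced. The remaining ingredient, the uniform bound $M$ on the directional derivative of $Re\,p$ over all $\theta$ and all $v$ with $\|v\|_{\infty}=1$, is routine once one notes that $p$ involves only finitely many variables.
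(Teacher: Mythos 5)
Your proposal is correct and follows essentially the same route as the paper: take the function $f_0(z_k)$ built from the one--variable Weierstrass function, verify it lies in every $E_n^{(k)}$ by transferring the one--variable oscillation through the $k$-th coordinate (using $|v_k|\ge\tfrac12$ to compare $|y_k-\theta_k|$ with $\|y-\theta\|_\infty$), show stability under adding a polynomial via a uniform Lipschitz bound on $Re\,p$ along directions with $\|v\|_\infty=1$, and conclude by density of polynomials in $\mc{A}(D^I)$. Your version merely makes the constants (the factor $2$ and the choices $N>2n$, $N>2(M+n)$) explicit where the paper says ``proceeding as in Lemma 2.3.''
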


\begin{proof}
Fix some $k \in I$. We consider the function $f_k : \overline{D}^I \to \mb{C}$ defined by
\begin{equation}
f_k \big( (z_i)_{i \in I} \big) = f_0(z_k),
\end{equation}
where $f_0 \in \mc{A}(D)$ is the function defined in the proof of Lemma 2.3. It is obvious that $f_k \in \mc{A}(D^I)$; we will prove that $f_k \in \mc{S}^{(k)}$. Indeed, for some $\theta = (\theta_i)_{i \in I} \in \mb{T}^I$ and $v = (v_i)_{i \in I}$ with $\|v\|_{\infty}=1$ and $|v_k| \geq \frac{1}{2}$ the quotient
\begin{equation}
\left| \frac{Ref_k(\theta + tv) - Ref_k(\theta)}{t} \right| = \left| \frac{u_0(\theta_k + tv_k) - u_0(\theta_k)}{t} \right|
\end{equation}
does not remain bounded as $t \to 0$ because $v_k \neq 0$ and $u_0$ has the properties mentioned in the previous section. Hence, $\mc{S}^{(k)} \neq \emptyset$.

\smallskip

Next, as in Lemma 2.3, we will prove that for every complex polynomial $p \big( (z_i)_{i \in I} \big)$ the function $f_k + p$ is also in $\mc{S}^{(k)}$. We remind the reader that $p$ is a polynomial in a set of variables $(z_i)_{i \in I}$ if it is a polynomial in the set $(z_j)_{j \in J}$ for some finite subset $J \subseteq I$. Let $\theta \in \mb{T}^I$ and $v = (v_i)_{i \in I} \in \mb{R}^I$ with $\|v\|_{\infty}=1$ and $|v_k| \geq \frac{1}{2}$. The function $h : [-1,1] \to \mb{R}$ defined by
\begin{equation}
h(t) = u(\theta+tv),
\end{equation}
where $u = Rep|_{\mb{T}^I}$, is of course of class $C^{\infty}$ and thus its derivative $h'$ is bounded on $[-1,1]$ by a constant $M>0$. Thus, for every $y \in \left[ \theta - v, \theta + v \right]$, $y \neq \theta$ there is a $\xi \in (-1,1)$ such that
\begin{equation}
\frac{|u(y)-u(\theta)|}{\|y-\theta\|_{\infty}} = |h'(\xi)| \leq M.
\end{equation}
Thus, proceeding now as in the proof of Lemma 2.3 we can easily prove that $f_k + p \in \mc{S}^{(k)}$. Then, using the fact that the space of polynomials is dense in $\mc{A}(D^I)$ (see \cite{mak} for example) we conclude that also $\mc{S}^{(k)}$ is dense $\mc{A}(D^I)$.
\end{proof}

Combining now these two lemmas, we conclude that each $\mc{S}^{(k)}$ is a $G_{\delta}$ dense subset of $\mc{A}(D^I)$ and thus, since $I$ is at most countable, from Baire's Category Theorem (in the complete metric space $\mc{A}(D^I)$) the same holds for their intersection
\begin{equation}
\mc{S} = \bigcap_{k \in I} \mc{S}^{(k)} = \bigcap_{n=1}^{\infty} \bigcap_{k \in I} E_n^{(k)}.
\end{equation}
From this, the analogue of Lemma 2.4 follows immediately: the set
\begin{equation}
\mc{T} = \mc{S} \cap i \mc{S} = \bigcap_{n=1}^{\infty} \bigcap_{k \in I} \big( E_n^{(k)} \cap iE_n^{(k)} \big)
\end{equation}
is also a $G_{\delta}$ dense subset of $\mc{A}(D^I)$. With this result at hand we proceed now to the proof of the main result of this section:

\medskip

\noindent {\it Proof of Theorem 3.1.} Let $E$ be the class of functions $f \in \mc{A}(D^I)$ such that both the real part $u$ and the imaginary part $\tilde{u}$ of $f$ are nowhere differentiable in the sense of Theorem 3.1. We will prove that $\mc{T} \subseteq E$ and from this our result will follow.

\smallskip

Let $f \in \mc{T}$, a point $\theta \in \mb{T}^I$ and a direction $v \in \mb{R}^I$ with $\|v\|_{\infty} =1$. Since $f \in \mc{S}$, for every $n$ there is a $y_n \in \left( \theta - \frac{1}{n} \cdot v, \theta + \frac{1}{n} \cdot v \right)$, $y_n \neq \theta$ such that
\begin{equation}
\frac{|u(y_n)-u(\theta)|}{\|y_n - \theta\|_{\infty}} > n,
\end{equation}
where $u = Ref|_{\mb{T}^I}$. Hence $y_n \to \theta$ and (46) yields that $u$ is not differentiable at $\theta$ in the direction $v$. Using the fact that $f \in i \mc{S}$ we have the same conclusion for the function $\tilde{u} = Imf|_{\mb{T}^I}$ and the proof is complete.
$\hfill\Box$

\begin{remark}
If $I$ is an uncountable set the result of Theorem 3.1 cannot hold: it is true that if $f: \overline{D}^I \to \mb{C}$ is a continuous function, then $f$ depends only on a countable number of coordinates (see \cite{mak} for example). Thus, if $z_{j_0}$ is a coordinate which does not belong in this countable set then the partial derivative of $f$ with respect to $z_{j_0}$ will be of course zero at every point of $\mb{T}^I$.

\smallskip

Nevertheless, one can prove that generically for every $f \in \mc{A}(D^I)$ the following holds:

\smallskip

\noindent {\it For every $n \in \mb{N}$, there is a countably infinite subset $J_n \subseteq I$ such that for every point $\theta \in \mb{T}^I$ and every direction $v \in \mb{R}^I$ with $\|v\|_{\infty}=1$ and $v|_{J_n} \not\equiv 0$ there are points $y_1, y_2 \in \left( \theta - \frac{1}{n} \cdot v, \ \theta + \frac{1}{n} \cdot v \right)$ with}
\begin{equation}
|u(y_1) - u(\theta)| > n \|y_1-\theta\|_{\infty} \ \ \mbox{and} \ \ |\tilde{u}(y_2) - \tilde{u}(\theta) | > n \|y_2-\theta\|_{\infty}.
\end{equation}
{\it where $u=Ref|_{\mb{T}^I}$ and $\tilde{u} = Imf|_{\mb{T}^I}$.}

\smallskip

One can also prove that, the set of functions such that the above sets $J_n$ coincide is dense in $\mc{A}(D^I)$. However, this set may not be $G_{\delta}$, and thus this would not be a generic property of $\mc{A}(D^I)$.
\end{remark}

\bigskip

\noindent {\bf Aknowledgements:} Both authors would like to express their gratidute to Professor Vassili Nestoridis for presenting the problem as well as for his help and guidance throughout the creation of this paper.

\bigskip

\noindent {\scshape Alexandros Eskenazis:} Department of Mathematics, University of Athens, Panepistimioupolis, 157 84, Athens, Greece.\\
E-mail: \href{mailto:alex\_eske@hotmail.com}{\url{alex\_eske@hotmail.com}}

\medskip

\noindent {\scshape Konstantinos Makridis:} Department of Mathematics, University of Athens, Panepistimioupolis, 157 84, Athens, Greece.\\
E-mail: \href{mailto:kmak167@gmail.com}{\url{kmak167@gmail.com}}
\end{document}